\newtheorem{theorem}{Theorem}[section]
\newtheorem{lemma}[theorem]{Lemma}
\newtheorem{proposition}[theorem]{Proposition}
\theoremstyle{definition}
\numberwithin{equation}{section}
\newcommand{\Soc}{\mathrm{Soc}}
\newcommand{\Aut}{\mathrm{Aut}}
\newcommand{\Out}{\mathrm{Out}}
\newcommand{\Dmc}{\mathcal{D}}
\newcommand{\Bmc}{\mathcal{B}}
\newcommand{\Pmc}{\mathcal{P}}
\renewcommand{\leq}{\leqslant}
\renewcommand{\geq}{\geqslant}
\renewcommand{\mod}[1]{\ (\mathrm{mod}{\ #1})}
\newcommand{\imod}[1]{\allowbreak\mkern4mu({\operator@font mod}\,\,#1)}
\begin{document}
 \title{On flag-transitive automorphism groups of symmetric designs}

 \author[S.H. Alavi]{Seyed Hassan Alavi}%
 \thanks{Corresponding author: S.H. Alavi}
 \address{Seyed Hassan Alavi, Department of Mathematics, Faculty of Science, Bu-Ali Sina University, Hamedan, Iran.
 }%
 \email{alavi.s.hassan@basu.ac.ir and  alavi.s.hassan@gmail.com (G-mail is preferred)}
 \author[N. Okhovat]{Narges Okhovat}%
 \address{Narges Okhovat, Department of Mathematics, Faculty of Science, Bu-Ali Sina University, Hamedan, Iran.}%
 \email{okhovat.nargeshh@gmail.com}
 \author[A. Daneshkhah]{Asharf Daneshkhah}%
 %\thanks{}
 \address{Asharf Daneshkhah, Department of Mathematics, Faculty of Science, Bu-Ali Sina University, Hamedan, Iran.
 }%
  \email{adanesh@basu.ac.ir}

 \subjclass[]{05B05; 05B25; 20B25}%
 \keywords{Symmetric design,  flag-transitive, point-primitive, point-imprimitive, automorphism group}
 \date{\today}%
 %\dedicatory{}%
 %\commby{}%

\begin{abstract}
    In this article, we study flag-transitive automorphism groups of non-trivial symmetric $(v, k, \lambda)$ designs, where $\lambda$ divides $k$ and $k\geq \lambda^2$. We show that such an automorphism group is either point-primitive of affine or almost simple type, or point-imprimitive with parameters $v=\lambda^{2}(\lambda+2)$ and $k=\lambda(\lambda+1)$, for some positive integer $\lambda$. We also provide some examples in both possibilities.
\end{abstract}

\maketitle
%% Your article

\section{Introduction}\label{sec:intro}

A $t$-\emph{design} $\Dmc=(\Pmc,\Bmc)$ with parameters $(v,k,\lambda)$ is an incidence structure consisting of a set $\Pmc$ of $v$ \emph{points}, and a set $\Bmc$ of $k$-element subsets of $\Pmc$, called \emph{blocks}, such that every $t$-element subset of points lies in exactly $\lambda$ blocks. The design $\Dmc$ is \emph{non-trivial} if $t < k < v-t$, and is \emph{symmetric} if $|\Bmc| = v$. By \cite[Theorem 1.1]{a:Camina94}, if $\Dmc$ is symmetric and non-trivial, then $t\leq 2$, see also \cite[Theorem 1.27]{b:Hugh-design}. Thus we study non-trivial symmetric $2$-designs with parameters $(v,k,\lambda)$ which we simply call non-trivial \emph{symmetric $(v,k,\lambda)$ designs}.
%A \emph{symmetric $(v,k,\lambda)$ design} is an incidence structure $\Dmc=(\Pmc,\Bmc)$  consisting of a set $\Pmc$ of $v$ \emph{points} and a set $\Bmc$ of $v$ \emph{blocks} such that every point is incident with exactly $k$ blocks, and every pair of blocks is incident with exactly $\lambda$ points. A \emph{non-trivial} symmetric design is the one in which $2<k<v-1$.
A \emph{flag} of $\Dmc$ is an incident pair $(\alpha,B)$, where $\alpha$ and $B$ are a point and a block of $\Dmc$, respectively. An \emph{automorphism} of a symmetric design $\Dmc$ is a permutation of the points permuting the blocks and preserving the incidence relation. An automorphism group $G$ of $\Dmc$ is called \emph{flag-transitive} if it is transitive on the set of flags of $\Dmc$. If $G$ leaves invariant a non-trivial partition of $\Pmc$, then $G$ is said to be \emph{point-imprimitive}, otherwise, $G$ is called \emph{point-primitive}. We here adopt the standard notation as in \cite{b:Atlas,b:Wilson} for finite simple groups of Lie type, for example, we use $PSL_{n}(q)$, $PSp_{n}(q)$, $PSU_{n}(q)$, $P\Omega_{2n+1}(q)$ and $P\Omega_{2n}^{\pm}(q)$ to denote the finite classical simple groups.
%A group $G$ is said to be \emph{almost simple} with socle $X$ if $X\unlhd G\leq \Aut(X)$, where $X$ is a nonabelian simple group.
Symmetric and alternating groups on $n$ letters are denoted by $S_{n}$ and $A_{n}$, respectively. Further notation and definitions in both design theory and group theory are standard and can be found, for example in \cite{b:Dixon,b:Hugh-design,b:lander}. We also use the software \textsf{GAP} \cite{GAP4} for computational arguments.

Flag-transitive incidence structures have been of most interest. In 1961, Higman and McLaughlin \cite{a:HigMcL61} proved that a flag-transitive automorphism group of a linear space must act primitively on its points set, and then Buekenhout, Delandtsheer and Doyen \cite{a:buekenhout-1988} studied this action in details and proved that a linear space admitting a flag-transitive automorphism group (which is in fact point-primitive) is either of affine, or almost simple type.
Thereafter, a deep result \cite{a:buekenhout-1990}, namely the classification of flag-transitive finite linear spaces relying on the Classification of Finite Simple Groups (CFSG) was announced.
Although, flag-transitive symmetric designs are not necessarily point-primitive, Regueiro \cite{a:reg-reduction} proved that a flag-transitive and point-primitive automorphism group of such designs for $\lambda\leq 4$ is of affine or almost simple type, and so using CFSG, she determined all flag-transitive and point-primitive biplanes ($\lambda=2$). In conclusion, she gave a classification of flag-transitive biplanes except for the $1$-dimensional affine case \cite{t:Regueiro}. Tian and Zhou \cite{a:Zhou-lam100} proved that a flag-transitive and point-primitive automorphism group of a symmetric design with $\lambda\leq 100$ must be of affine or almost simple type. Generally, Zieschang \cite{a:zieschang-1988} proved in 1988 that if a flag-transitive automorphism group of a $2$-design with $\gcd(r,\lambda)=1$ is a point-primitive group of affine or almost simple type, and this result has been generalised by Zhuo and Zhan \cite{a:Zhou-lamcond18} for $\lambda \geq  \gcd(r, \lambda)^2$. In this paper, we study flag-transitive automorphism groups of symmetric $(v, k, \lambda)$ designs, where $\lambda$ divides $k$ and $k\geq \lambda^2$, and we show that such an automorphism group is not necessarily point-primitive:

\begin{theorem}\label{thm:main}
Let $\Dmc=(\Pmc,\Bmc)$ be a non-trivial symmetric $(v, k, \lambda)$ design with $\lambda\geq 1$, and let $G$ be a flag-transitive  automorphism group of $\Dmc$. If $\lambda$ divides $k$ and $k\geq \lambda^2$, then one of the following holds:
\begin{enumerate}[{\rm (a)}]
    \item $G$ is point-primitive of  affine or almost simple type;
    \item $G$ is point-imprimitive and $v=\lambda^{2}(\lambda+2)$ and $k=\lambda(\lambda+1)$, for some positive integer $\lambda$. In particular, if $G$ has $d$ classes of imprimitivity of size $c$, then there is a constant $l$ such that, for each block $B$  and each class $\Delta$, the size $|B \cap \Delta|$ is either $0$, or $l$, and $(c,d,l)=(\lambda^2,\lambda+2,\lambda)$ or $(\lambda+2,\lambda^{2},2)$.
\end{enumerate}
\end{theorem}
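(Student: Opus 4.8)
The plan is to split the argument according to whether $G$ acts primitively on $\Pmc$.

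\textbf{Point-primitive case (part (a)).} Assuming $G$ is point-primitive, I would invoke the O'Nan--Scott theorem: $G$ is of affine type, of almost simple type, or of one of the product/diagonal types (simple diagonal, compound diagonal, product action, twisted wreath, or holomorph of a nonabelian simple or compound group), and the goal is to rule out the last family. The arithmetic inputs are the standard identities $r=k$ and $\lambda(v-1)=k(k-1)$: writing $k=\lambda m$ (legitimate since $\lambda\mid k$) gives $v=\lambda m^{2}-m+1$, the hypothesis $k\geq\lambda^{2}$ becomes $m\geq\lambda$, and hence $k^{2}=\lambda(v+m-1)\geq\lambda v$, so $k\geq\sqrt{\lambda v}$. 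The group-theoretic input is the classical flag-transitivity condition that $k\mid\lambda d$ for every subdegree $d$ of $G$ — since $\lambda d=k\,|B\cap\Delta|$ for a block $B$ through a point $\alpha$ and $\Delta$ any $G_{\alpha}$-orbit — which with $k=\lambda m$ reads $m\mid d$, together with $k\mid|G_{\alpha}|$. In each product/diagonal type the degree $v$ is of a very restricted shape, and both $|G_{\alpha}|$ and the set of subdegrees are tightly constrained by the known structure of the action; combining this with $k\geq\sqrt{\lambda v}$ and the divisibility conditions should produce contradictions, following the template of Regueiro and of Tian--Zhou. I expect the simple- and compound-diagonal cases to be the real obstacle, since there crude order comparisons are insufficient and one must exploit the precise list of subdegrees.

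\textbf{Point-imprimitive case (part (b)): set-up.} Suppose now that $G$ preserves a partition of $\Pmc$ into $d$ classes of common size $c$, with $v=cd$ and $c,d\geq2$. First I would establish the constant $l$: if blocks $B_{1},B_{2}$ both meet a class $\Delta$, pick $\alpha_{i}\in B_{i}\cap\Delta$ and $g\in G$ with $(\alpha_{1},B_{1})^{g}=(\alpha_{2},B_{2})$; then $g$ stabilises $\Delta$ and carries $B_{1}$ to $B_{2}$, so $|B_{1}\cap\Delta|=|B_{2}\cap\Delta|$, and class-transitivity makes this value global, say $l$. Each block is then a disjoint union of intersections of size $l$, so $l\mid k$; and $l<k$, since otherwise every block would lie inside a class, which is impossible because $\lambda\geq1$ forces some block to contain points of two distinct classes. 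Counting the triples $(\alpha,\beta,B)$ with $\alpha,\beta$ distinct points of a fixed class and $B\ni\alpha,\beta$ in two ways gives $\lambda c(c-1)=(ck/l)\,l(l-1)$, i.e. $\lambda(c-1)=k(l-1)$; with $k=\lambda m$ this becomes $c=m(l-1)+1$, so $l\geq2$ and $\gcd(c,m)=1$.

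\textbf{Point-imprimitive case: the arithmetic.} The key calculation is $v-c=\lambda m^{2}-ml=m(\lambda m-l)$, so from $c\mid v$ and $\gcd(c,m)=1$ one obtains $c\mid(\lambda m-l)$; since $0<\lambda m-l=k-l$, write $\lambda m-l=ec$ with $e\geq1$. Substituting $c=m(l-1)+1$ and rearranging gives
\[
m\bigl(\lambda-e(l-1)\bigr)=e+l .
\]
As the right side is positive, $a:=\lambda-e(l-1)\geq1$ and $m=(e+l)/a$; then $m\geq\lambda$ gives $e+l\geq a^{2}+ae(l-1)$, which a short estimate (using $l\geq2$, $e\geq1$) shows is impossible once $a\geq2$. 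Hence $a=1$, so $\lambda=e(l-1)+1$ and $m=e+l$. Applying $m\geq\lambda$ once more yields $e(l-2)\leq l-1$: for $l=2$ this is automatic and $m=\lambda+1$; for $l\geq3$ it forces $e=1$ except for $(l,e)=(3,2)$, where $e=1$ again gives $\lambda=l$ and $m=\lambda+1$, while $(l,e)=(3,2)$ would give $k=\lambda m=25$ with $l=3\nmid k$, which is excluded. Thus $m=\lambda+1$ throughout, so $k=\lambda(\lambda+1)$ and $v=\lambda m^{2}-m+1=\lambda^{2}(\lambda+2)$; and $l\in\{2,\lambda\}$, so $c=m(l-1)+1$ and $d=v/c$ give $(c,d,l)=(\lambda+2,\lambda^{2},2)$ or $(\lambda^{2},\lambda+2,\lambda)$, as required.

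In summary, once the structure lemma $|B\cap\Delta|\in\{0,l\}$ and the divisibility $k\mid\lambda d$ are in hand, part (b) is an elementary finite computation; the technical heart of the theorem is the O'Nan--Scott elimination in part (a), and within it the diagonal types, where the available arithmetic must be combined with detailed information on the subdegrees rather than with mere order estimates.
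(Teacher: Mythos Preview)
Your proposal is essentially sound, but the two halves relate to the paper in opposite ways.

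\textbf{Part (b).} Your direct arithmetic is correct and is genuinely different from the paper. The paper simply invokes Praeger--Zhou (their Theorem~1.1), which already gives the four possible parameter families for imprimitive flag-transitive symmetric designs, and then checks in one line that only the family $(v,k,\lambda)=(\lambda^{2}(\lambda+2),\lambda(\lambda+1),\lambda)$ survives the constraint $k\geq\lambda^{2}$. Your argument instead re-derives the constant-$l$ property, the identity $\lambda(c-1)=k(l-1)$, and then carries out the Diophantine analysis by hand. This buys self-containment (no black-box citation) at the cost of a page of computation; both are valid. One small check you gloss over: your exclusion of $a\geq 2$ via ``a short estimate'' is fine, since $a^{2}+ae(l-1)-(e+l)\geq 4+(2l-3)e-l\geq 4+(2l-3)-l=l+1>0$ for $l\geq 2$, $e\geq 1$.

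\textbf{Part (a).} The O'Nan--Scott strategy is exactly what the paper does, but your expectation about where the difficulty lies is off. You predict that for the diagonal types ``crude order comparisons are insufficient and one must exploit the precise list of subdegrees.'' In fact the paper never touches subdegree lists: for simple diagonal type it uses only (i) the order bound $|T|^{m-1}<t^{3}$ coming from $\lambda v<k^{2}$ and $\lambda\leq t$, (ii) the divisibility $t\mid m_{1}h$ with $h\mid|T|$ to get $m\leq 5$, and then (iii) the observation that $\gcd(t,|T|)=1$ forces $t\mid (m!)\,|\mathrm{Out}(T)|$, whence $|T|^{m-1}<(m!)^{3}|\mathrm{Out}(T)|^{3}$. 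For $m\leq 4$ this yields $|T|<8\,|\mathrm{Out}(T)|^{3}$, and a short CFSG case-check (their Lemma~2.3) shows only $A_{5},A_{6}$ survive, which are then eliminated directly. So the diagonal case is handled entirely by order estimates plus the $|\mathrm{Out}(T)|$ bound; the harder computational work is actually in the product case, where one must solve explicit Diophantine constraints for small $\ell$. Your sketch is viable, but if you follow the subdegree route you will be doing more than necessary.
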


We highlight here that if $\lambda$ divides $k$, then $\gcd(k,\lambda)^2=\lambda^2>\lambda$ which does not satisfy the conditions which have been studied in \cite{a:Zhou-lamcond18,a:zieschang-1988}. Moreover, in Section \ref{sec:example}, we provide some examples to show that both possibilities in Theorem \ref{thm:main} can actually occur. 

In order to prove Theorem \ref{thm:main}(a), we apply O'Nan-Scott Theorem \cite{a:LPS-Onan-Scott} and discuss possible types of primitive groups in Section \ref{sec:prim}. We further note that our proof for part (a) relies on CFSG. To prove part (b), we use an important result by Praeger and Zhou \cite[Theorem 1.1]{a:Praeger-imprimitive}  on characterisation of imprimitive flag-transitive symmetric designs.

\subsection{Examples and comments on Theorem~\ref{thm:main}}\label{sec:example}

Here, we give some examples of symmetric $(v,k,\lambda)$ designs admitting flag-transitive automorphism groups, where $\lambda$ divides $k$ and $k\geq \lambda^2$. In Table \ref{tbl:examples}, we list some small examples of such designs with $\lambda\leq 3$. To our knowledge the design in line 2 is the only point-primitive example of symmetric designs with $v\leq 2500$ satisfying the conditions of Theorem \ref{thm:main} and this motivates the authors to investigate symmetric designs admitting symplectic automorphism groups \cite{a:ADO-PSp4}. More examples of symmetric designs admitting flag-transitive and point-imprimitive automorphism groups can be found in \cite{a:Praeger-imprimitive} and references therein.\smallskip

\noindent \textbf{Line 1.} Hussain \cite{a:hussain1945} showed that there are exactly three symmetric $(16, 6, 2)$ designs, and Regueiro proved that exactly two of such designs are flag-transitive and point-imprimitive \cite[p. 139]{a:reg-reduction}.\smallskip

\noindent \textbf{Line 2.} The symmetric design in this line  arises from the study of primitive permutation groups with small degrees. This design belongs to a class of symmetric designs with parameters $(3^m(3^m+1)/2,3^{m-1}(3^m-1)/2,3^{m-1}(3^{m-1}-1)/2)$, for some positive integer $m>1$, see \cite{a:Braic-2500-nopower,a:Dempwolff2001}.
%Let $(V,f)$ be a non-degenerate orthogonal space of dimension $2m+1$ over $\mathbb{F}_3$ with discriminant $(-1)^m$. Then all anisotropic $1$-dimensional subspaces $X =\langle x\rangle \leq V$ satisfying $f(x, x)=-1$ are the points $\Pmc $ of these designs, while blocks have the form $B(X)=\{Y\in P \mid  f (X,Y)=0\}$, $X\in P$.
If $m=2$, then we obtain the symmetric $(45,12,3)$ design admitting $PSp_4(3)$ or $PSp_4(3): 2$ as flag-transitive automorphism group of rank $3$, see \cite{a:Braic-2500-nopower}.\smallskip

\noindent \textbf{Lines 3-4.} Mathon and Spence \cite{a:mathon-1996} constructed $2616$ pairwise non-isomorphic symmetric $(45,12,3)$ designs with  non-trivial automorphism groups. Praeger \cite{a:praeger-45point} proved that there are exactly two flag-transitive  symmetric $(45, 12, 3)$ designs, exactly one of which admits a point-imprimitive group, and this example satisfies Line 4, but not Line 3.\smallskip

\begin{table}
\centering
\small
\caption{Some symmetric designs satisfying the conditions in Theorem \ref{thm:main}} \label{tbl:examples}
\begin{tabular}{lllllllllllp{3cm}}
    \hline
    % after \\: \hline or \cline{col1-col2} \cline{col3-col4} ...
    Line & $v$ & $k$ &$\lambda$ & $c$ & $d$ & $l$ & Case & Examples & Reference & Comments \\
    \hline
    $1$ & $16$ & $6$ & $2$ & $4$ & $4$ & $2$ & (b) & $2$ & \cite{a:hussain1945},  \cite{a:reg-reduction}& imprimitive\\
    $2$ & $45$ & $12$ & $3$ & - & - & - & (a) & $1$ & \cite{a:Braic-2500-nopower}& primitive\\
    $3$ & $45$ & $12$ & $3$ & $5$ & $9$ & $2$ & (b) & None & \cite{a:praeger-45point} & imprimitive\\
    $4$ & $45$ & $12$ & $3$ & $9$ & $5$ & $3$ & (b) & 1 & \cite{a:praeger-45point} & imprimitive\\
    %5 & 96 & 20 & 4 & 16 & 6 & 4 & (b) & $\geq 4$ &\cite{a:Praeger-9-20-4},\cite{a:reg-reduction} & imprimitive\\
    %6 & 96 & 20 & 4 & 6 & 16 & 2 & (b) & $\geq 3$ &\cite{a:Praeger-9-20-4} & imprimitive\\
    \hline
\end{tabular}
\end{table}

\section{Preliminaries}\label{sec:pre}

In this section, we state some useful facts in both design theory and group theory.

\begin{lemma}\label{lem:six}{\rm \cite[Lemma 2.1]{a:ABD-PSL2}}
Let $\Dmc$ be a symmetric $(v,k,\lambda)$ design, and let $G$ be a flag-transitive automorphism group of $\Dmc$. If $\alpha$ is a point in $\Pmc$ and $H:=G_{\alpha}$, then
\begin{enumerate}[\rm (a)]
    \item $k(k-1)=\lambda(v-1)$;
    %        \item $4\lambda(v-1)+1$ is square;
    \item $k\mid |H|$ and $\lambda v<k^2$.
    %        \item $k\mid \gcd(\lambda(v-1),|H|)$;
    %        \item $k\mid \lambda d$, for all subdegrees $d$ of $G$.
\end{enumerate}
\end{lemma}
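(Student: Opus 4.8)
The plan is to obtain both parts from the elementary counting identities of a $2$-design together with the orbit--stabiliser theorem applied to the block action of the point stabiliser.

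For part (a), I would first recall that in any $2$-design the replication number $r$ (the number of blocks through a point) is constant, and that counting incident point--block pairs two ways gives $bk=vr$, where $b=|\Bmc|$; since $\Dmc$ is symmetric, $b=v$, so $r=k$. Then I would fix a point $\alpha$ and count in two ways the pairs $(\beta,B)$ with $\beta\in\Pmc\setminus\{\alpha\}$, $B\in\Bmc$ and $\alpha,\beta\in B$: on one side each of the $v-1$ points $\beta\neq\alpha$ lies with $\alpha$ in exactly $\lambda$ blocks, giving $\lambda(v-1)$; on the other side each of the $r$ blocks through $\alpha$ contributes its $k-1$ remaining points, giving $r(k-1)$. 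Hence $\lambda(v-1)=r(k-1)=k(k-1)$, which is (a).

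For part (b), I would use that flag-transitivity of $G$ forces $H=G_{\alpha}$ to act transitively on the set of blocks incident with $\alpha$: given two such blocks $B,B'$, flag-transitivity supplies $g\in G$ with $(\alpha,B)^{g}=(\alpha,B')$, and then $g\in H$. This block-orbit has size $r=k$, so by orbit--stabiliser $k=|H:H_{B}|$ divides $|H|$. For the inequality I would rearrange (a) to $\lambda v=k^{2}-k+\lambda=k^{2}-(k-\lambda)$, so it suffices to show $k>\lambda$; since $\Dmc$ is non-trivial we have $k<v$, hence $k-1<v-1$, and then $k(k-1)=\lambda(v-1)>\lambda(k-1)$ together with $k\geq 2$ yields $k>\lambda$. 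Therefore $\lambda v=k^{2}-(k-\lambda)<k^{2}$.

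None of this is technically demanding; the only steps that warrant an explicit word are the passage from flag-transitivity to transitivity of $G_{\alpha}$ on the blocks through $\alpha$ (which is what makes orbit--stabiliser give $k\mid|H|$ rather than a weaker divisibility), and the appeal to non-triviality to secure the \emph{strict} inequality $\lambda<k$ in place of $\lambda\leq k$. I do not anticipate a real obstacle beyond bookkeeping.
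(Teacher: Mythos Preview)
Your argument is correct and entirely standard. The paper does not actually supply a proof of this lemma; it merely cites \cite[Lemma~2.1]{a:ABD-PSL2}, so there is nothing substantive to compare against. Your double-counting derivation of $k(k-1)=\lambda(v-1)$, the use of flag-transitivity to make $G_{\alpha}$ transitive on the $r=k$ blocks through $\alpha$ (whence $k\mid|H|$ by orbit--stabiliser), and the rearrangement $\lambda v=k^{2}-(k-\lambda)$ together with $k>\lambda$ from non-triviality are exactly the textbook route and match what one finds in the cited source. The only point worth flagging is that the lemma as stated does not explicitly assume $\Dmc$ is non-trivial, whereas your strict inequality $\lambda v<k^{2}$ genuinely needs $k<v$; but the paper's standing hypothesis throughout is that all symmetric designs under consideration are non-trivial, so this is not a gap.
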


\begin{lemma}{\rm \cite[Corollary 4.3]{a:AB-Large-15}}\label{lem:bound}
Let $T$ be a  finite simple classical group of dimension $n$ over a finite field $\mathbb{F}_q$ of size $q$. Then
\begin{enumerate}[{\rm (a)}]
    \item If $T=PSL_n(q) $ with $n\geq 2$, then $ |T|>q^{n^2-2} $;
    \item If $T=PSU_n(q) $ with $n\geq 3$, then $ |T|>(1-q^{-1})q^{n^2-2} $;
    \item If $T=PSp_n(q) $ with $n\geq 4$, then $|T|>q^{\frac{1}{2}n(n+1)}/(2\alpha)$, where $\alpha=\gcd(2, q-1)$;
    \item If $T=P\Omega ^{\epsilon}_n(q) $ with $n\geq 7$, then $ |T|> q^{\frac{1}{2}n(n-1)}/(4\beta)$, where $\beta=\gcd(2, n)$.
\end{enumerate}
%In particular, $|T|> \dfrac{1}{8}q^{\frac{1}{2}n(n-1)}$.
\end{lemma}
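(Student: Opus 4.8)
The plan is to deduce each of the four inequalities directly from the exact order formula for $T$ recorded in \cite{b:Atlas}. In every case $|T|$ can be written in the shape
\[
|T| \;=\; \frac{1}{d}\,q^{N}\prod_{i}\bigl(1-\delta_{i}q^{-e_{i}}\bigr),
\]
where $N$ is precisely the exponent of $q$ occurring in the conclusion, the $e_{i}$ are the degrees of the basic invariants of the ambient algebraic group, each $\delta_{i}\in\{1,-1\}$, and $d$ is the order of the centre of the relevant quasisimple cover: thus $d=\gcd(n,q-1)$ for $T=PSL_{n}(q)$, $d=\gcd(n,q+1)$ for $T=PSU_{n}(q)$, $d=\gcd(2,q-1)\leq2$ for $T=PSp_{n}(q)$, and $d\leq4$ in the orthogonal case, with $d\leq2$ when $n$ is odd. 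The two ingredients of the argument are: a crude upper bound for $d$ (namely $d\leq q-1$, $q+1$, $2$, $4$ respectively), and a lower bound for $\prod_{i}(1-\delta_{i}q^{-e_{i}})$ obtained from the elementary estimate $\prod_{i\in S}(1-q^{-i})>1-\sum_{i\in S}q^{-i}$, which collapses the tail of the product into a geometric series. Combining the two reduces each part to a single inequality in $q$, valid for all $q\geq2$ throughout the stated range of $n$ (possibly after a short hand-check in small rank).

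For $T=PSL_{n}(q)$ one has $|T|=\gcd(n,q-1)^{-1}q^{n^{2}-1}\prod_{i=2}^{n}(1-q^{-i})$; since $\prod_{i=2}^{n}(1-q^{-i})>1-\tfrac{1}{q(q-1)}\geq1-q^{-1}$ for $q\geq2$ while $\gcd(n,q-1)\leq q-1$, part~(a) follows from $|T|>\tfrac{q-1}{q}\cdot\tfrac{q^{n^{2}-1}}{q-1}=q^{n^{2}-2}$. For $T=PSp_{n}(q)$ and $T=P\Omega^{\epsilon}_{n}(q)$ the correction product has the shape $\prod_{i}(1-q^{-2i})$ times at most one further factor of the form $1\pm q^{-m}$; in all these cases it exceeds $\tfrac12$ for every $q\geq2$, since $\prod_{i\geq1}(1-2^{-2i})>\tfrac23$ and the single possible extra factor is at least $\tfrac{15}{16}$ (here the hypotheses $n\geq4$, resp.\ $n\geq7$, guarantee there are enough factors for this). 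Dividing by $d\leq2$ (symplectic, odd-dimensional orthogonal) or $d\leq4$ (even-dimensional orthogonal) gives $|T|>q^{N}/(2\alpha)$ and $|T|>q^{N}/(4\beta)$ as claimed, the factor $\beta=\gcd(2,n)$ being exactly what absorbs the extra term $q^{m}-\epsilon$ and the slightly larger centre that appear when $n$ is even.

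The delicate case is~(b). Here $|T|=\gcd(n,q+1)^{-1}q^{n^{2}-1}\prod_{i=2}^{n}(1-(-1)^{i}q^{-i})$, and since $\gcd(n,q+1)$ can be as large as $q+1$, which \emph{exceeds} $q$, the crude bound $\prod_{i=2}^{n}(1-(-1)^{i}q^{-i})>1-\tfrac{1}{q(q-1)}$ is too weak to beat it. The remedy is to exploit the alternating signs: writing the product as $\prod_{i\ \mathrm{odd}}(1+q^{-i})\cdot\prod_{i\ \mathrm{even}}(1-q^{-i})$, keeping the factor $1+q^{-3}$ explicit, and applying the geometric-series estimate only to the even-indexed factors, one obtains $\prod_{i=2}^{n}(1-(-1)^{i}q^{-i})>1-q^{-2}=\tfrac{(q-1)(q+1)}{q^{2}}\geq\tfrac{(q-1)\gcd(n,q+1)}{q^{2}}$, whence $|T|>\tfrac{q-1}{q}\,q^{n^{2}-2}=(1-q^{-1})q^{n^{2}-2}$. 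Finally, the handful of small cases in which the order formula above describes a non-simple group (such as $PSU_{3}(2)$ and the low-rank symplectic and orthogonal degeneracies) are settled by direct computation, and in fact the displayed estimates remain valid for them. The main obstacle is thus confined entirely to part~(b): parts~(a), (c) and~(d) succumb to the crude estimate $d\leq q$ (respectively $d\leq4$), whereas for the unitary groups the $\gcd$-factor $q+1$ forces the finer analysis of the sign-alternating product.
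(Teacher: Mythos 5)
Your proposal is correct, but note that the paper itself contains no argument for Lemma~\ref{lem:bound}: the statement is simply quoted from \cite[Corollary 4.3]{a:AB-Large-15}, so the comparison is between an external citation and your self-contained derivation from the order formulas. I checked the details and they hold up. In part (a), $\prod_{i=2}^{n}(1-q^{-i})>1-\tfrac{1}{q(q-1)}\geq 1-q^{-1}$ together with $\gcd(n,q-1)\leq q-1$ does give $|T|>q^{n^2-2}$. The genuinely delicate step is, as you say, the unitary case, and your estimate works: discarding every odd-indexed factor except $1+q^{-3}$ and applying the geometric-series bound to the even-indexed factors gives $\prod_{i=2}^{n}\bigl(1-(-1)^{i}q^{-i}\bigr)>(1+q^{-3})\bigl(1-\tfrac{1}{q^{2}-1}\bigr)$, and a short computation shows $(1+q^{-3})\bigl(1-\tfrac{1}{q^{2}-1}\bigr)-(1-q^{-2})=\tfrac{(q-2)(q+1)}{q^{3}(q^{2}-1)}\geq 0$ for $q\geq 2$, with strictness supplied by the geometric-series step; combined with $\gcd(n,q+1)\leq q+1$ this yields $(1-q^{-1})q^{n^{2}-2}$. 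For (c) and (d), $\prod_{i\geq 1}(1-q^{-2i})>\tfrac{2}{3}$ for $q\geq 2$, and in the even-dimensional orthogonal case the extra factor satisfies $1-q^{-m}\geq\tfrac{15}{16}$ since $m\geq 4$, so the product exceeds $\tfrac{2}{3}\cdot\tfrac{15}{16}=\tfrac{5}{8}>\tfrac12$, which comfortably absorbs the centres of order at most $2$ and $4$ respectively; the exponents $\tfrac12 n(n+1)$ and $\tfrac12 n(n-1)$ match the $q$-parts of the order formulas. What the paper's citation buys is brevity and access to the sharper estimates of \cite{a:AB-Large-15}; what your argument buys is independence from that reference at the cost of a page of routine, but in the unitary case slightly careful, estimates.
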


\begin{lemma}\label{lem:out}
Let $T$ be a non-abelian finite simple group satisfying
\begin{align}\label{eq:out}
|T|<8\cdot |\Out(T)|^{3}.
\end{align}
Then $T$ is isomorphic to $A_{5}$ or $A_{6}$.
\end{lemma}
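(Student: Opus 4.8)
The plan is to reduce the inequality $|T| < 8\cdot|\Out(T)|^3$ to a finite check by running through the classification of finite simple groups, exploiting the fact that $|\Out(T)|$ grows only polylogarithmically in $|T|$ for every infinite family, whereas $|T|$ itself grows super-polynomially. First I would dispose of the sporadic groups and the Tits group: for all of these $|\Out(T)| \leq 2$, so the inequality would force $|T| < 64$, which no sporadic group satisfies. Next I would treat the alternating groups $A_n$ with $n \geq 5$: here $|\Out(A_n)| = 2$ for $n \neq 6$ and $|\Out(A_6)| = 4$, so \eqref{eq:out} reads $n!/2 < 64$ (resp.\ $< 512$ for $n=6$); this holds precisely for $n = 5$ (giving $|A_5| = 60 < 64$) and $n = 6$ (giving $|A_6| = 360 < 512$), and fails for all $n \geq 7$ since $|A_7| = 2520 > 64$. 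So the two genuine examples emerge exactly here.

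The bulk of the argument is the groups of Lie type. Writing $T$ as a simple group of Lie type over $\mathbb{F}_q$ with $q = p^f$, one has the standard bound $|\Out(T)| = d\cdot f\cdot g$, where $d$ is the order of the diagonal automorphism group, $g \leq 6$ the order of the graph automorphism group, and $f = \log_p q$; in every case $d$ is bounded by a small constant times $q$ (indeed $d \leq n+1$ for type $A_{n-1}$, and $d \leq 4$ otherwise), so $|\Out(T)| \leq c\, q \log_2 q$ for an absolute constant $c$, while $|T| \geq q^{N}$ for $N$ the number of positive roots, which grows quadratically in the rank. For the classical groups I would invoke Lemma~\ref{lem:bound} directly to get lower bounds of the form $|T| > q^{m}$ with $m$ large (e.g.\ $m = n^2-2$ for $\PSL_n(q)$), and then observe that $8\cdot|\Out(T)|^3 \leq 8 (d f g)^3$ is of order $q^{3}(\log q)^3$ up to constants — or $q^{6}(\log q)^3$ allowing for the diagonal factor in type $A$ — which is dominated by $q^m$ once the rank or $q$ is not tiny. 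This leaves only a short list of small-rank groups over small fields ($\PSL_2(q)$ for small $q$, $\PSL_3(q)$, $\PSU_3(q)$, $\PSp_4(q)$, $\Sp_6(2)$, $\Omega_8^+(2)$, the small exceptional groups, etc.), which I would check by hand or with \textsf{GAP}; for instance $\PSL_2(q)$ has $|\Out| \leq 2f$ and $|T| = q(q^2-1)/\gcd(2,q-1) \geq q^3/4$, so \eqref{eq:out} forces $q^3/4 < 8\cdot 8 f^3 = 64 f^3$, i.e.\ $q^3 < 256 f^3$ with $q = p^f$, which has no solutions — in particular the near-misses $\PSL_2(4) \cong \PSL_2(5) \cong A_5$ and $\PSL_2(9) \cong A_6$ are already accounted for.

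The main obstacle is organizational rather than conceptual: one must be careful with the exceptional isomorphisms (so as not to double-count or miss $A_5 \cong \PSL_2(4) \cong \PSL_2(5)$ and $A_6 \cong \PSL_2(9) \cong \Sp_4(2)'$) and with the few families where $|\Out(T)|$ is largest relative to $|T|$ — essentially $\PSL_2(q)$, $\PSL_3(q)$, $\PSU_3(q)$, and $\PSp_4(2^f)$ (which has an extra graph automorphism), as well as $\Omega_8^+(q)$ with its triality giving $g = 6$. In each of these I would pin down $|\Out(T)|$ exactly from the standard tables and verify the inequality fails for all $q$; the triality case $\Omega_8^+(q)$ gives $|\Out| \le 6\cdot 4\cdot f$ but $|T| > q^{28}/16$ by Lemma~\ref{lem:bound}(d), which is never close. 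Collecting the surviving cases yields exactly $T \cong A_5$ and $T \cong A_6$, completing the proof.
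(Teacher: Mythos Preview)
Your plan is correct and matches the paper's proof: both dispose of the sporadic groups and $A_n$ ($n\geq 7$) via $|\Out(T)|\leq 2$, confirm that $A_5$ and $A_6$ satisfy \eqref{eq:out}, and then run through the Lie-type families using Lemma~\ref{lem:bound} together with the known formulas for $|\Out(T)|$ to reduce each family to a finite check over small $q$ and rank.

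One small slip in your $\PSL_2(q)$ illustration: the inequality $q^3<256f^3$ is \emph{not} solution-free---for instance $q=8$ ($f=3$) and $q=16$ ($f=4$) satisfy it and are not isomorphic to $A_5$ or $A_6$---so these still require the direct check you allude to. (For even $q$ one actually has $|\Out|=f$ rather than $2f$; with that sharpening only $q\in\{4,5,9\}$ survive, recovering exactly $A_5$ and $A_6$.) The paper handles $\PSL_2(q)$ via the cruder bound $|T|>q^{2}$ from Lemma~\ref{lem:bound}(a), obtains $q^{2}<64a^{3}$, lists the surviving pairs $(p,a)$, and checks each one explicitly.
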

\begin{proof}
If $T$ is a sporadic simple group or an alternating group $A_{n}$ with $n\geq 7$, then $|\Out(T)|\in\{ 1,2\}$, and so by~\eqref{eq:out}, we must have $|T|<64$, which is a contradiction. Note that the alternating groups $A_{5}$ and $A_{6}$ satisfy \eqref{eq:out} as claimed. Therefore, we only need to consider the case where $T$ is a finite simple group of Lie type. In what follows, we discuss each case separately.

Let $T=PSL_{n}(q)$ with $q=p^{a}$ and $n\geq 2$. If $n=2$, then $q\geq 4$ and $|\Out(T)|=a\cdot \gcd(2,q-1)$, and so by Lemmas~\ref{lem:bound}(a) and \eqref{eq:out}, we have that  $q^{2}<|PSL_{2}(q)|<8a^{3}\cdot \gcd(2,q-1)^{3}\leq 64a^{3}$. Thus, $q^{2}<64a^{3}$. This inequality holds only for $(p,a)\in\{(2,1),(2,2), (2,3),(2,4), (2,5),(2,6),(2,7), (3,1), (3,2),(3,3),(5,1),(7,1)\}$. Note in this case that $q\geq 4$, and hence by \eqref{eq:out}, we conclude that $T$ is either $PSL_{2}(4)\cong PSL_{2}(5)\cong A_{5}$, or $PSL_{2}(9)\cong A_{6}$, as claimed. If $n=3$, then by Lemma~\ref{lem:bound}(a), we have that $q^{7}<64a^{3}\cdot \gcd(3,q-1)^{3}< 64a^{3}q^{3}$, and so $q^{4}<64a^{3}$. If $q$ would be odd, then we would have $3^{4a}<64a^{3}$, which is impossible. If $q=2^{a}$, then $2^{a}<64a^{3}$ would hold only for $a=1,2$. Therefore, $T$ is isomorphic to $PSL_{3}(2)$ or $PSL_{3}(4)$. These simple groups do not satisfy \eqref{eq:out}. If $n\geq 4$, then \eqref{eq:out} implies that $ q^{11}<64a^{3}$, but this inequality has no possible solution.

Let $T=PSU_{n}(q)$ with $q=p^{a}$ and $n\geq 3$. By Lemma~\ref{lem:bound}(b), we have that $|T|> (1-q^{-1})q^{n^2-2}$, and so \eqref{eq:out} follows that   $(1-q^{-1})q^{n^2-2}<64a^{3}\cdot \gcd(n, q+1)^{3}$. If $n=3$, then $(1-q^{-1})q^7<64a^{3}\cdot \gcd(n,q+1)^{3}$, and so $ q^6<27\cdot 64 a^3 $. This inequality holds only for $(p,a)\in\{(2,1), (2,2),(3,1) \}$. Note that $PSU_{3}(2)$ is not simple. Therefore, $T$ is isomorphic to $PSU_{3}(3)$ or $PSU_{3}(4)$. These simple groups do not satisfy \eqref{eq:out}. If $n\geq 4$, then since $(q+1)^3<4\cdot q^3(q-1)$, we would have $q^{n^2-3}<64a^{3}\cdot \gcd(n, q+1)^{3}/(q-1)<4\cdot 64a^{3}(q+1)^{3}/4(q-1)<4\cdot 64 a^{3}q^{3}$, and so $q^{n^2-6}<4\cdot 64 a^{3}$, and hence $q^{10}<4\cdot 64a^{3}$, which is impossible.

Let $T=PSp_{n}(q)$ with $q=p^{a}$ and $n\geq 4$. By Lemma~\ref{lem:bound}(c), we observe that $|T|> q^{\frac{1}{2}n(n+1)}/2\gcd(2,q-1)\geq q^{\frac{1}{2}n(n+1)}/4$. By \eqref{eq:out}, we have that $q^{10}\leq q^{\frac{1}{2}n(n+1)}<4\cdot 64a^{3}$, and so $q^{10}<4\cdot 64a^{3}$,  which is impossible.

Let $T=P\Omega_n(q)$ with $q=p^{a}$ odd and $n\geq 7$. Then we conclude by  Lemma~\ref{lem:bound}(d) that $|T|>q^{\frac{1}{2}n(n-1)}/8$. Since $|\Out(T)|=2a$ and $n\geq 7$, it follows from \eqref{eq:out} that $q^{21}<8^3a^{3}$, which is impossible.

Let $T=P\Omega^{\epsilon}_n(q)$ with $q=p^{a}$ and $n\geq 8$ and $\epsilon=\pm$. It follows from Lemma~\ref{lem:bound}(d) that $|T|>q^{\frac{1}{2}n(n-1)}/8$. Note that $|\Out(T)|\leq 6a\cdot \gcd(4,q^{\frac{n}{2}}-\epsilon)\leq 24 a$. Then \eqref{eq:out} implies that $q^{28}<8^2\cdot 24^3a^{3}$, which is impossible.

Let $T$ be one of the finite exceptional groups $F_4(q)$, $E_6(q)$, $E_7(q)$, $E_8(q)$, $^2\!F_4(q)$ ($q=2^{2m+1}$), $^3\!D_4(q)$ and $^2\!E_6(q)$. Then  $|T|>q^{20} $, and so \eqref{eq:out} implies that $q^{20}<8\cdot 2^3\cdot 3^3a^{3}$, which is impossible. If $T=G_2(q) $ with $q=p^{a} \neq 2$. Then by \eqref{eq:out}, we have that $q^{12}< q^{6}(q^2-1)(q^6-1)<8\cdot 2^3a^{3}$, and so $q^{12}<8\cdot 2^3a^{3} $, which is impossible. Similarly, if $T$ is one of the groups $^{2}\!B_2(q) $ with $q=2^{2m+1}$ and $^{2}\!G_2(q) $ with $q=3^{2m+1}$, then $|T|>q^{4}$, and so \eqref{eq:out} implies that $q^{4}<8a^3$, which is impossible.
\end{proof}

%\begin{lemma}
%Let $G$ be a flag-transitive point-primitive automorphism group with socle $T^m$ of simple diagonal type. Then then $k$ divides $\lambda m_1 h$, where $m_1\leq m$ and $h$ divides $|T|$.
%let $G$ be a flag-transitive primitive automorphism group of $\Dmc$ of type \rm{(d)} on the set $\Pmc$. Then $k$ divides $\lambda \ell(m-1)$.
%\end{lemma}
%\begin{proof}
%Proven in \rm{Lemma~4 \cite{a:reg-reduction}}.
%\end{proof}

\section{Point-primitive designs}\label{sec:prim}

In what follows, we assume that  $\Dmc=(\Pmc,\Bmc)$ is a non-trivial symmetric $(v, k, \lambda)$ design admitting a flag-transitive and  point-primitive automorphism group $G$. Let also $\lambda$ divide $k$ and $k\geq \lambda^2$ and set $t:=k/\lambda$. Notice that $\lambda <k$, and so $t \geq 2$. We moreover observe by Lemma~\ref{lem:six}(a) that
\begin{align}
k&=\dfrac{v+t-1}{t};\label{eq:k} \\
\lambda &=\dfrac{v+t-1}{t^2}. \label{eq:lam}
\end{align}
Since also $G$ is a primitive permutation group on $\Pmc$, then by O'Nan-Scott Theorem \cite{a:LPS-Onan-Scott}, $G$ is of one of the following types:
\begin{enumerate}[{\rm (a)}]
\item Affine;
\item Almost simple;
\item Simple diagonal;
\item Product;
\item Twisted wreath product.
\end{enumerate}

\subsection{Product and twisted wreath product type}

In this section, we assume that $G$ is a primitive group of product type on $\Pmc$, that is to say, $G\leq H \wr S_{\ell}$, where $H$ is of almost simple or diagonal type on the set $\Gamma$ of size $m:=|\Gamma |\geq 5$ and $\ell\geq 2$. In this case, $\Pmc = \Gamma^{\ell}$.

\begin{lemma}\label{lem:sub-prod}
Let $G$ be a flag-transitive point-primitive automorphism group of product type. Then $k$ divides $\lambda \ell(m-1)$.
\end{lemma}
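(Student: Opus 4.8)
The goal is to show that $k \mid \lambda\,\ell(m-1)$ when $G$ is flag-transitive, point-primitive of product type with $G \leq H \wr S_\ell$ acting on $\Pmc = \Gamma^\ell$, $|\Gamma| = m \geq 5$, $\ell \geq 2$. The plan is to exploit the product structure of the point set together with flag-transitivity. First I would fix a point $\alpha = (\gamma_1,\dots,\gamma_\ell) \in \Pmc$ and a block $B$ incident with $\alpha$, and count incidences in a clever way. The natural combinatorial object to bring in is the set of points of $\Pmc$ that differ from $\alpha$ in exactly one coordinate: there are $\ell(m-1)$ such points, and they form a $G_\alpha$-invariant set (indeed $H_\alpha \wr S_\ell$ permutes these "neighbours" of $\alpha$). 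Call this set $N(\alpha)$, so $|N(\alpha)| = \ell(m-1)$.

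The key step is to relate $|B \cap N(\alpha)|$ to the parameters via flag-transitivity. Since $G$ is flag-transitive, $G_\alpha$ acts transitively on the set of blocks through $\alpha$, which has size $r = k$ (here $r = k$ because the design is symmetric). Now I would count the pairs $(\beta, B')$ with $\beta \in N(\alpha)$, $B'$ a block through $\alpha$, and $\beta \in B'$. Summing over $\beta$ first: each $\beta \in N(\alpha)$ lies together with $\alpha$ in exactly $\lambda$ blocks, so the count is $\lambda \cdot \ell(m-1)$. Summing over $B'$ first: it is $\sum_{B' \ni \alpha} |B' \cap N(\alpha)|$. Because $G_\alpha$ is transitive on the $k$ blocks through $\alpha$ and preserves $N(\alpha)$, the quantity $|B' \cap N(\alpha)|$ is the same integer, say $s$, for every block $B'$ through $\alpha$. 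Hence $\lambda\,\ell(m-1) = k \cdot s$, which gives $k \mid \lambda\,\ell(m-1)$ immediately.

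The step I expect to require the most care is the assertion that $|B' \cap N(\alpha)|$ is constant over all blocks $B'$ through $\alpha$: this needs $G_\alpha$ to be transitive on those blocks (flag-transitivity) and that $N(\alpha)$ is genuinely $G_\alpha$-invariant, which uses the explicit product action — an element of $G \leq H \wr S_\ell$ fixing $\alpha$ permutes coordinates and acts within each coordinate fixing $\gamma_i$, hence sends a point differing from $\alpha$ in one coordinate to another such point. One should double-check there is no subtlety when the top group or the coordinate actions are not full (i.e. $G$ is a proper subgroup of $H \wr S_\ell$): invariance of $N(\alpha)$ holds regardless, since it is defined purely in terms of the coordinate-difference pattern, which every element of $H \wr S_\ell$ respects. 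With $N(\alpha)$ invariant and $G_\alpha$ block-transitive, the averaging argument is then routine, and the divisibility $k \mid \lambda\,\ell(m-1)$ follows.
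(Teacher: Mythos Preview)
Your proposal is correct and is essentially the argument that the paper invokes: the paper does not give its own proof but simply refers to the proof of Lemma~4 in O'Reilly-Regueiro's paper \cite{a:reg-reduction}, and that argument is exactly the one you outline---take the $G_\alpha$-invariant set $N(\alpha)$ of the $\ell(m-1)$ points differing from $\alpha$ in a single coordinate, use flag-transitivity to see that $|B'\cap N(\alpha)|$ is constant over blocks $B'\ni\alpha$, and double-count to obtain $k\cdot s=\lambda\,\ell(m-1)$. Your careful check that $N(\alpha)$ is $(H\wr S_\ell)_\alpha$-invariant (hence $G_\alpha$-invariant for any $G\leq H\wr S_\ell$) is precisely the point that makes the averaging step go through.
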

\begin{proof}
See the proof of \rm{Lemma~4} in \cite{a:reg-reduction}.
\end{proof}

\begin{proposition}\label{prop:prod}
If $\Dmc=(\Pmc,\Bmc)$ is a non-trivial symmetric $(v, k, \lambda)$ design admitting a flag-transitive and point-primitive automorphism group $G$, where $\lambda$ divides $k$ and $k\geq \lambda^2$, then $G$ is not of product type.
\end{proposition}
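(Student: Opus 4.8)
The plan is to combine the divisibility constraint of Lemma~\ref{lem:sub-prod} with the basic numerical identities \eqref{eq:k}--\eqref{eq:lam} and the Fisher-type inequality $\lambda v < k^2$ from Lemma~\ref{lem:six}(b), together with the fact that $v = m^\ell$ for a primitive group of product type. First I would record that, since $\lambda \mid k$, the hypothesis $k \geq \lambda^2$ gives $t = k/\lambda \geq \lambda$, and in particular $t \geq 2$. Writing $v = m^\ell$ with $m \geq 5$ and $\ell \geq 2$, equation \eqref{eq:k} becomes $kt = m^\ell + t - 1$, so $k = (m^\ell - 1)/t + 1$; thus $t \mid m^\ell - 1$, and $k - 1 = (m^\ell-1)/t$. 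The goal is to show these constraints are incompatible.

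The key step is to exploit Lemma~\ref{lem:sub-prod}: $k \mid \lambda \ell (m-1)$. Since $k = \lambda t$, this gives $\lambda t \mid \lambda \ell(m-1)$, i.e. $t \mid \ell(m-1)$. Now I would compare sizes. From $k - 1 = (m^\ell - 1)/t \geq (m^\ell-1)/(\ell(m-1)) = (1 + m + \cdots + m^{\ell-1})/\ell \geq m^{\ell-1}/\ell$, so $k > m^{\ell-1}/\ell$. On the other hand, $k \mid \lambda\ell(m-1)$ forces $k \leq \lambda \ell (m-1)$, and combining with $\lambda = k/t \leq k/\lambda$ (from $t \geq \lambda$) one gets $\lambda \leq \sqrt{k}$, hence $k \leq \sqrt{k}\,\ell(m-1)$, i.e. $\sqrt{k} \leq \ell(m-1)$, so $k \leq \ell^2(m-1)^2$. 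Putting the two bounds together yields $m^{\ell-1}/\ell < \ell^2(m-1)^2 < \ell^2 m^2$, i.e. $m^{\ell-3} < \ell^3$. Since $m \geq 5$, this already kills all $\ell \geq 6$ (as $5^3 = 125 > \ell^3$ fails for $\ell\ge 6$... more carefully $m^{\ell-3}<\ell^3$ with $m\ge5$ forces $\ell\le 5$, and for $\ell=2,3$ it forces $m$ bounded as well), leaving a small finite list of pairs $(m,\ell)$ — essentially $\ell \in \{2,3,4,5\}$ with $m$ bounded in each case — to be eliminated by hand using the exact divisibility $t \mid \gcd(m^\ell - 1,\ \ell(m-1))$ together with $t \geq \lambda$ and $\lambda v < k^2$.

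For the residual finite cases, the cleanest tool is the refined identity: from $t \mid m^\ell - 1$ and $t \mid \ell(m-1)$ we get $t \mid \gcd(m^\ell-1, \ell(m-1))$; but $\gcd(m^\ell - 1, m - 1) = m - 1$, so $t \mid (m-1)\gcd(\ell, (m^\ell-1)/(m-1)) $ — one checks $\gcd((m^\ell-1)/(m-1), m-1) \mid \ell$, so $t \mid \ell(m-1)$ with the extra information that $t/\gcd(t,m-1)$ divides $\ell$. This tightly bounds $t$, hence bounds $\lambda \leq t$ and $k = \lambda t \leq t^2$, while $v = m^\ell$ is forced to be large; then $\lambda v < k^2 \leq t^4$ combined with $v = t^2\lambda + (t-1) \cdot$(const) gives a direct contradiction in each of the finitely many surviving $(m,\ell)$. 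The main obstacle I anticipate is not any single inequality but making the case analysis for small $\ell$ genuinely exhaustive — in particular being careful that for $\ell = 2$ the bound $m^{\ell-3} < \ell^3$ is vacuous, so one must there argue directly from $t \mid 2(m-1)$ and $t \mid m^2 - 1 = (m-1)(m+1)$ (which is automatic) that $t$ is essentially $O(m)$ while $k \sim m^2/t$, forcing $\lambda = k/t \sim m^2/t^2$ to be too small to satisfy $k \geq \lambda^2$ unless $t$ is tiny, and then ruling out those few values. I would also double-check the degenerate possibility $m = 5$, $\ell$ small against the known symmetric-design parameter list to be safe.
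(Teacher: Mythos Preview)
Your overall strategy coincides with the paper's: from $t \mid \ell(m-1)$ and $\lambda \leq t$ you reach $m^{\ell-3} < \ell^3$, forcing $\ell \leq 5$ (the paper obtains the equivalent bound $m^\ell < \ell^3(m-1)^3$ directly via $v < t^3$), and then disposes of $\ell \in \{2,3,4,5\}$ one at a time using $t\mid \ell(m-1)$ together with \eqref{eq:k}--\eqref{eq:lam}.

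Two points about the residual analysis need correction. First, your informal reasoning for $\ell=2$ is inverted: if $\lambda$ is small then $k \geq \lambda^2$ becomes \emph{easier}, not harder, to satisfy. The useful inequality is $\lambda \leq t$, i.e.\ $t^3 \geq m^2 + t - 1$, which is a \emph{lower} bound on $t$; combined with $t \mid 2(m-1)$ and $t^2 \mid m^2+t-1$ this still leaves a genuine (short) search rather than an asymptotic contradiction. Second, and more importantly, the $\ell=2$ case does \emph{not} close by arithmetic alone: the search yields two surviving parameter triples, $(v,k,\lambda)=(121,25,5)$ with $(m,t)=(11,5)$ and $(441,56,7)$ with $(m,t)=(21,8)$, both of which satisfy every divisibility and inequality constraint you have written down (including $k\geq\lambda^2$ and $t\mid 2(m-1)$). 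The paper eliminates these only by invoking the existing classifications of flag-transitive point-primitive symmetric designs in \cite{a:Braic-2500-nopower} and \cite[Theorem~1.1]{a:Zhou-lam100}. So your closing remark about checking ``against the known symmetric-design parameter list'' is not a safety net but an essential step; plan to cite those results explicitly.
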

\begin{proof}
Assume the contrary. Suppose that $G$ is of product type. Then $v=m^{\ell}$. Note by Lemma~\ref{lem:sub-prod} that $k$ divides $\lambda\ell(m-1)$, and so $t=k/\lambda$ divides $\ell(m-1)$. We also note by Lemma \ref{lem:six}(b) that $\lambda v <k^{2}$. Then $v<\lambda t^2$, and since $\lambda\leq t$, we have that $v < t^3$. Recall that $t$ divides $\ell(m-1)$. Hence
\begin{align}\label{eq:prod}
m^\ell<\ell^3(m-1)^3.
\end{align}
Then $m^{\ell}<\ell^{3} m^{3}$, or equivalently, $m^{\ell-3}<\ell^{3}$. Since $m\geq 5$, it follows that $5^{\ell-3}<\ell^{3}$, and this is true for $2\leq \ell\leq 6$. If $\ell=6$, then since $m^{6-3}<6^3$, we conclude that $m=5$, but $(m,\ell)=(5,6)$ does not satisfy \eqref{eq:prod}. Therefore, $2\leq \ell\leq 5$.

Suppose first that $\ell=5$. Then by~\eqref{eq:prod}, we have that $m^{5}<5^{3}(m-1)^{3}$, and so $5\leq m\leq 9$. It follows from \eqref{eq:k} that $t$ divides $m^{5}-1$. For each $5\leq m\leq 9$, we can obtain divisors $t$ of $m^{5}-1$. Note by \eqref{eq:lam} that $t^{2}$ must divide $m^{5}-t+1$. This is true only for $m=7$ when $t=2$ or $6$ for which $(v,k,\lambda)=(16807, 8404, 4202)$ or $(16807, 2802, 467)$, respectively. Since $\lambda^{2}\leq k$, these parameters can be ruled out.

Suppose that $\ell=4$. Then by~\eqref{eq:prod}, we have that $m^{5}<4^{3}(m-1)^{3}$, and so $5\leq m\leq 9$. By the same argument as in the case where $\ell=5$, by \eqref{eq:k} and \eqref{eq:lam}, we obtain possible parameters $(m,t,v,k,\lambda)$ as in Table~\ref{tbl:l4}. Note by Lemma~\ref{lem:sub-prod} that $k$ must divide $4\lambda(m-1)$, and this is not true, for all parameters in Table~\ref{tbl:l4}.
\begin{table}[h]
    \centering
    \small
    \caption{Possible values for $(m,t,v,k,\lambda)$ when $\ell=4$.}\label{tbl:l4}
    \begin{tabular}{lllll}
        \hline
        % after \\: \hline or \cline{col1-col2} \cline{col3-col4} ...
        $m$ & $t$ & $v$ & $k$ &  $ \lambda $ \\
        \hline
        13 & 51 & 28561 & 561 & 11\\
        31 & 555 & 923521 & 1665 & 3\\
        47 & 345 & 4879681 & 14145 & 41\\
        57 & 416 & 10556001 & 25376 & 61\\
        \hline
    \end{tabular}
\end{table}

Suppose now that $\ell=3$. We again apply Lemma~\ref{lem:sub-prod} and conclude that $t$ divides $3(m-1)$. Then there exists a positive integer $x$ such that $3(m-1)=tx$, and so $m=(tx+3)/3$. By \eqref{eq:lam}, we have that
\begin{align*}
\lambda=\frac{m^2+t-1}{t^2}=\frac{t^2x^3+9tx^2+27x+27}{27t}.
\end{align*}
Then $27\lambda t=t^2x^3+9tx^2+27x+27$. Therefore, $t$ must divide $27x+27$, and so $ty=27x+27$, for some positive integer $y$. Thus,
\begin{align}\label{eq:lam3}
\lambda=\frac{t(ty-27)^3+9\cdot 27(ty-27)^2+27^{3}y}{27^{4}},
\end{align}
for some positive integers $t$ and $y$. Since $\lambda^2\leq k $, we have that $\lambda\leq t$, and so
\begin{align}\label{eq:l3}
t(ty-27)^3+9\cdot 27(ty-27)^2+27^{3}y\leq 27^{4}t.
\end{align}
If $y\geq 32$, then $t(ty-27)^3+9\cdot 27(ty-27)^2+27^{3}y\geq t(32t-27)^3+9\cdot 27(32t-27)^2+32\cdot 27^{3}>27^{4}t$, for $t\geq 2$. Thus $1\leq y\leq 31$, and so by \eqref{eq:l3}, we conclude that $2\leq t\leq 107$. For each such $y$ and $t$, by straightforward calculation, we observe that $\lambda$ as in \eqref{eq:lam3} is not a positive integer.

Suppose finally that $\ell=2$. Recall by Lemma~\ref{lem:sub-prod} that $t$ divides $2(m-1)$. Then $2(m-1)=tx$ for some positive integer $x$, and so $m=(tx+2)/2$. It follows from \eqref{eq:lam} that $\lambda =(tx^2+4x+4)/4t$, or equivalently, $4t\lambda=tx^2+4x+4$. This shows that $t$ divides $4x+4$, and so $ty=4x+4$, for some positive integer $y$. Therefore, $4^{3}\lambda=(ty-4)^2+16y$. Since $\lambda^2\leq k $, we have that $\lambda\leq t$, and so $(ty-4)^2+16y\leq4^3 t$. If $y\geq 6$, then $(6t-4)^2+6\cdot 16\leq4^3 t$, which has no possible solution for $t$. Thus $1\leq y\leq 5$. Since also $(t-4)^2+16\leq4^3 t$, we conclude that $2\leq t\leq 71$, and so \eqref{eq:k} and \eqref{eq:lam} imply that
\begin{align*}
k= \frac{t(t^2y^2-8ty+16y+16)}{64} \text{ and } \lambda=\frac{(ty-4)^2+16y}{64},
\end{align*}
where $2\leq t\leq 71$ and $1\leq y\leq 5$. For these values of $t$ and $y$, considering the fact that $m\geq 5$, $k\geq \lambda^{2}$ and $\lambda$ divides $k$, we obtain $(v,k,\lambda)= (121, 25, 5)$ or $(441, 56, 7)$ respectively when $(t,y)=(5,4)$ or $(8,3)$. These possibilities can be ruled out by \cite{a:Braic-2500-nopower} or \cite[Theorem~1.1]{a:Zhou-lam100}.
\end{proof}

\begin{proposition}\label{prop:tw}
If $\Dmc=(\Pmc,\Bmc)$ is a non-trivial symmetric $(v, k, \lambda)$ design admitting a flag-transitive and point-primitive automorphism group $G$, where $\lambda$ divides $k$ and $k\geq \lambda^2$, then $G$ is not of twisted wreath product type.
\end{proposition}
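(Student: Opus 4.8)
The plan is to reduce the twisted wreath case to the product‑action analysis of Proposition~\ref{prop:prod}: the regular action of a non‑abelian socle is itself a product action, so after identifying it as such, Lemma~\ref{lem:sub-prod} and the elementary estimate used for product type become available.

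Assume for a contradiction that $G$ is of twisted wreath product type. Then $N:=\Soc(G)=T^{\ell}$ for a non‑abelian simple group $T$; the subgroup $N$ is regular on $\Pmc$, the point stabiliser $G_{\alpha}$ complements $N$ and permutes the $\ell$ simple direct factors of $N$ transitively, and $\ell\geq 6$ by the O'Nan--Scott Theorem \cite{a:LPS-Onan-Scott}. In particular $v=|N|=|T|^{\ell}$ with $|T|\geq 60$. Since $N$ is the unique minimal normal subgroup and is non‑abelian we have $C_{G}(N)=1$, so on identifying $\Pmc$ with $N$ we get $G\leq\mathrm{Hol}(N)=N\rtimes\Aut(N)$, with $N$ acting by translations and $G_{\alpha}$ by conjugation; as $T$ is simple, $\Aut(N)=\Aut(T)\wr S_{\ell}$ and this is precisely the product action of $\mathrm{Hol}(T)\wr S_{\ell}$ on $\Gamma^{\ell}$, where $\Gamma=T$. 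Here $H:=\mathrm{Hol}(T)$ acts primitively on $\Gamma$ (the stabiliser $\Aut(T)$ is maximal because $T$ is simple) and is of diagonal type, with $m:=|\Gamma|=|T|\geq 5$ and $\ell\geq 2$, so $G$ sits inside the product‑type setting of this section and Lemma~\ref{lem:sub-prod} is applicable (its proof goes through because $G_{\alpha}$ permutes the $\ell$ coordinate directions transitively, producing a non‑trivial $G_{\alpha}$‑orbit on $\Pmc$ of size at most $\ell(m-1)$).

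Now I would argue exactly as in the proof of Proposition~\ref{prop:prod}. Lemma~\ref{lem:sub-prod} gives $k\mid\lambda\ell(m-1)$, so $t:=k/\lambda$ divides $\ell(m-1)$ and hence $t\leq\ell(m-1)<\ell|T|$. Lemma~\ref{lem:six}(b) gives $\lambda v<k^{2}=\lambda^{2}t^{2}$, so $v<\lambda t^{2}$; and since $\lambda^{2}\leq k=\lambda t$ we have $\lambda\leq t$, so that $v<t^{3}<\ell^{3}|T|^{3}$. Combining with $v=|T|^{\ell}$ yields $|T|^{\ell-3}<\ell^{3}$. This is impossible: since $\ell\geq 6$ the left‑hand side is at least $|T|^{3}\geq 60^{3}$, and in fact $60^{\ell-3}>\ell^{3}$ for every $\ell\geq 6$ (the inequality holds at $\ell=6$, and raising $\ell$ by one multiplies the left‑hand side by $60$ and the right‑hand side by less than $(7/6)^{3}<2$). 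This contradiction completes the proof.

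I expect the only delicate point to be the identification in the second paragraph, namely that the regular action of the non‑abelian socle together with the conjugation action of $G_{\alpha}$ really is the product action of $\mathrm{Hol}(T)\wr S_{\ell}$ on $T^{\ell}$, so that the mechanism behind Lemma~\ref{lem:sub-prod} is genuinely available, and that $\mathrm{Hol}(T)$ qualifies as an admissible $H$ --- primitive, of diagonal type, on a set of size $m\geq 5$. Once that is secured, the rest is the same short estimate used for product type; the point is that in the twisted wreath case the lower bound $\ell\geq 6$ for the number of factors (together with $|T|\geq 60$) makes the resulting inequality $|T|^{\ell-3}<\ell^{3}$ fail immediately, whereas for genuine product type $\ell$ could be as small as $2$ and $m$ as small as $5$, which is why that case required a longer analysis.
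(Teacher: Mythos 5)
Your proof is correct and follows essentially the same route as the paper: the paper's proof simply cites \cite[Remark 2(ii)]{a:LPS-Onan-Scott} to place a twisted wreath product type group inside a wreath product $H\wr S_{\ell}$ (with $H$ of simple diagonal type on $T$) acting in product action, and then appeals to Proposition~\ref{prop:prod}. Your version makes this reduction self-contained by identifying $G\leq\mathrm{Hol}(T^{\ell})\leq\mathrm{Hol}(T)\wr S_{\ell}$ via the regular socle, and then re-runs the short numerical estimate using $\ell\geq 6$ instead of invoking Proposition~\ref{prop:prod} wholesale; both steps are sound, and correctly noted is the only delicate point, namely that Lemma~\ref{lem:sub-prod} applies because its proof only needs $G$ to lie in the wreath product in product action.
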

\begin{proof}
If $G$ would be of twisted wreath product type, then by \cite[Remark 2(ii)]{a:LPS-Onan-Scott}, it would be contained in the wreath product $H \wr S_m$ with $H = T \times T$ of simple diagonal type, and so $G$ would act on $\Pmc$ by product action, and this contradicts Proposition~\ref{prop:prod}.
\end{proof}

\subsection{Simple diagonal type}\label{sec:sd}

In this section, we suppose that $G$ is a primitive group of diagonal type. Let $M=\Soc(G)=T_1\times\ldots\times T_m $, where $T_i\cong T$ is a non-abelian finite simple group, for $i=1,\ldots, m$. Then $G$ may be viewed as a subgroup of $M\cdot(\Out(T)\times S_m)$. Here, $G_\alpha$ is isomorphic to a subgroup of $\Aut(T ) \times S_m$ and $M_{\alpha} \cong T$ is a diagonal subgroup of $M$, and so $|\Pmc|=|T|^{m-1}$.

\begin{lemma}\label{lem:sub-sd}
Let $G$ be a flag-transitive point-primitive automorphism group of simple diagonal type with socle $T^m$. Then $k$ divides $\lambda m_1 h$, where $m_1\leq m$ and $h$ divides $|T|$.
\end{lemma}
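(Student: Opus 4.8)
The plan is to deduce this from the standard divisibility property of flag-transitive symmetric designs together with the fact that $M_{\alpha}\cong T$ is a \emph{normal} subgroup of $G_{\alpha}$. First recall the divisibility property (used implicitly in the proof of Lemma~\ref{lem:sub-prod}; cf. Lemma~4 of \cite{a:reg-reduction}): if $\Delta$ is any $G_{\alpha}$-orbit on $\Pmc$ with $\alpha\notin\Delta$, then $k$ divides $\lambda|\Delta|$. Indeed, for a block $B$ through $\alpha$ the number $|B\cap\Delta|$ does not depend on $B$, since the $k$ blocks through $\alpha$ form a single $G_{\alpha}$-orbit and $G_{\alpha}$ stabilises $\Delta$ setwise; counting the incident pairs $(\beta,B)$ with $\beta\in\Delta$ and $\alpha,\beta\in B$ in two ways then gives $\lambda|\Delta|=k\,|B\cap\Delta|$. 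So it is enough to produce one such orbit $\Delta$ with $|\Delta|=m_{1}h$, where $m_{1}\leq m$ and $h$ divides $|T|$.

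To find $\Delta$, I would work with the usual model of the diagonal action: identify $\Pmc$ with $T^{m-1}$ so that $\alpha$ corresponds to $(1,\dots,1)$, the base group $M$ acts by coordinate-wise left translation, $M_{\alpha}\cong T$ acts by simultaneous conjugation, and $G_{\alpha}$ induces on the $m$ simple factors of $M$ a transitive permutation group $P\leq S_{m}$ (transitive because $M$ normalises each factor, so the conjugation action on the factors factors through $G/M\cong G_{\alpha}/M_{\alpha}$, and $\Soc(G)$ is a minimal normal subgroup of $G$). Choose a nontrivial $c\in T$ whose $T$-conjugacy class $c^{T}$ is invariant under all automorphisms of $T$ induced by $G_{\alpha}$, let $\beta$ be the point whose first coordinate is $c$ and whose remaining coordinates are $1$, and put $\Delta:=\beta^{G_{\alpha}}$ and $\mathcal{O}:=\beta^{M_{\alpha}}$. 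Then $\mathcal{O}=\{(y,1,\dots,1):y\in c^{T}\}$ has size $h:=|c^{T}|=[T:C_{T}(c)]$, which divides $|T|$. Since $M_{\alpha}=M\cap G_{\alpha}\trianglelefteq G_{\alpha}$, the orbit $\Delta$ is a disjoint union of $G_{\alpha}$-translates of $\mathcal{O}$, which are $G_{\alpha}$-conjugate and hence all have length $h$, so $|\Delta|=m_{1}h$ with $m_{1}=[G_{\alpha}:\mathrm{Stab}_{G_{\alpha}}(\mathcal{O})]$. A direct check with the coordinate action shows that an element of $G_{\alpha}$ inducing on $\{1,\dots,m\}$ a permutation fixing the first point sends $(y,1,\dots,1)$ to $(\phi(y),1,\dots,1)$, where $\phi$ is the induced automorphism of $T$; by the choice of $c$ such an element stabilises $\mathcal{O}$, so $\mathrm{Stab}_{G_{\alpha}}(\mathcal{O})$ contains the preimage in $G_{\alpha}$ of the point-stabiliser $P_{1}$, whence $m_{1}\leq[P:P_{1}]=m$. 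Applying the divisibility property to $\Delta$ yields $k\mid\lambda|\Delta|=\lambda m_{1}h$, as claimed.

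The only genuinely non-routine step is the choice of $c$: one needs a nontrivial conjugacy class of $T$ stabilised by the relevant outer automorphisms, i.e. one has to control how $\Out(T)$ fuses the conjugacy classes of $T$. For the overwhelming majority of $T$ this costs nothing — for instance any $T$ with a unique class of involutions has that class characteristic, and for groups of Lie type one can usually take $c$ to be a suitable involution or a long-root element — but the groups with several involution classes and those carrying graph or triality automorphisms will need a short case-by-case verification (alternatively one can weaken the bound to $m_{1}\leq m\,|\Out(T)|$ and absorb the extra factor where the lemma is later applied). Everything else is elementary bookkeeping with the identification $\Pmc\cong T^{m-1}$, so I expect this class-fusion question to be where the actual effort lies.
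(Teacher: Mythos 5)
Your overall strategy is sound and is in the same spirit as the argument the paper points to (the paper gives no proof of its own, it simply refers to the proof of Proposition~3.1 in \cite{a:Zhou-lam100}): the counting step $\lambda|\Delta|=k\,|B\cap\Delta|$ for a $G_{\alpha}$-orbit $\Delta$ is correct, and so is the bookkeeping that, since $M_{\alpha}\cong T$ is normal in $G_{\alpha}$, the orbit $\Delta=\beta^{G_{\alpha}}$ splits into $m_{1}$ equal $M_{\alpha}$-orbits of length $h=|c^{T}|$ dividing $|T|$.

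The genuine gap is the step that produces the bound $m_{1}\leq m$, which is the only non-trivial content of the lemma. Your argument needs a non-trivial element $c\in T$ whose $T$-conjugacy class is invariant under every automorphism of $T$ induced by $G_{\alpha}$; without such a $c$, the $M_{\alpha}$-orbits inside $\beta^{G_{\alpha}}$ are indexed by pairs (coordinate position, fused class), and $m_{1}$ can be as large as $m$ times the number of classes fused by the induced outer automorphisms, so the stabiliser of $\mathcal{O}$ need not contain the preimage of $P_{1}$. You acknowledge this, but you neither prove the existence of such a class nor cite a result guaranteeing it; this is a statement about class fusion under $\Out(T)$ that genuinely requires a CFSG-based case analysis (unique involution classes, long-root elements versus the graph automorphisms of $\Sp_4(2^a)$, $F_4(2^a)$, $G_2(3^a)$, triality for $P\Omega_8^{+}(q)$, the exceptional automorphism of $A_6$, etc.), and that analysis is exactly the missing content. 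The proposed fallback, replacing $m_{1}\leq m$ by $m_{1}\leq m\cdot|\Out(T)|$, proves a strictly weaker statement than the lemma: the paper uses $t\leq m_{1}h\leq m|T|$ in the proof of Proposition~\ref{prop:diag} to get $|T|^{m-4}<m^{3}$ and hence $m\leq 5$, so the weakened bound cannot be substituted without redoing that proposition. As it stands, then, your write-up establishes the lemma only modulo an unverified classification-dependent claim, and should either include that verification or replace the construction by one whose orbit count is bounded by $m$ unconditionally.
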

\begin{proof}
See the proof of \rm{Proposition~3.1} in \cite{a:Zhou-lam100}.
\end{proof}

\begin{proposition}\label{prop:diag}
If $\Dmc=(\Pmc,\Bmc)$ is a non-trivial symmetric $(v, k, \lambda)$ design admitting a flag-transitive and point-primitive automorphism group $G$, where $\lambda$ divides $k$ and $k\geq \lambda^2$, then $G$ is not of simple diagonal type.
\end{proposition}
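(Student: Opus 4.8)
The plan is to mimic the strategy used for the product type case (Proposition~\ref{prop:prod}), combining the combinatorial identities \eqref{eq:k} and \eqref{eq:lam} with the divisibility condition from Lemma~\ref{lem:sub-sd} and the bound $\lambda v < k^2$ from Lemma~\ref{lem:six}(b), and then to use the fact that a nonabelian simple group $T$ cannot be too small relative to its outer automorphism group (Lemma~\ref{lem:out}). First I would record that in the simple diagonal case $v=|T|^{m-1}$, and set $t=k/\lambda\geq 2$ as before, so that $\lambda v<k^2$ together with $\lambda\leq t$ gives $v<t^3$. By Lemma~\ref{lem:sub-sd}, $t=k/\lambda$ divides $m_1 h$ with $m_1\leq m$ and $h\mid |T|$, hence $t\leq m|T|$ and therefore
\begin{align}\label{eq:sd-main}
|T|^{m-1}=v<t^3\leq m^3|T|^3.
\end{align}

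Next I would split on $m$. For $m\geq 5$ inequality \eqref{eq:sd-main} forces $|T|^{m-4}<m^3$, and since $|T|\geq 60$ this already restricts $m$ to a very small range (indeed $m\leq 5$, and a closer look rules out $m=5$); so effectively $m\in\{2,3,4\}$. For each such $m$ one gets $|T|^{m-1}<m^3|T|^3$, which is automatically true for $m\le 4$, so the size bound alone is not enough and one must bring in the divisibility conditions more carefully. The key extra ingredient is that, by \eqref{eq:k}, $t$ divides $v-1=|T|^{m-1}-1$, so $\gcd(t,|T|)=1$; combined with $t\mid m_1 h$ and $h\mid|T|$ this yields $t\mid m_1$, hence $t\leq m$. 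Plugging $t\leq m$ back into $v<t^3\le m^3$ gives $|T|^{m-1}<m^3$, which for $m=2$ says $|T|<8$ (impossible), for $m=3$ says $|T|^2<27$ (impossible), and for $m=4$ says $|T|^3<64$ (impossible). This is the heart of the argument and I expect the step $\gcd(t,|T|)=1\Rightarrow t\mid m_1\Rightarrow t\le m$ to be the one requiring the most care, since one must make sure the statement of Lemma~\ref{lem:sub-sd} really does allow factoring out the part of $t$ coprime to $|T|$ in this way; if the lemma only gives $t\mid \lambda m_1 h$ in a weaker form, I would instead argue that the $|T|$-free part of $t$ divides $m_1\le m$, which suffices identically.

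If for some reason the coprimality shortcut is not available in the exact form needed, the fallback plan is the brute-force route used for $\ell=2,3$ in Proposition~\ref{prop:prod}: from $t\le m|T|$ and $v=|T|^{m-1}<t^3$ one still gets $m\le 4$, and then for $m\in\{2,3,4\}$ one parametrises, writing $m_1 h=ts$ for a positive integer $s$ and using $t^2\lambda = v+t-1$ together with the constraint $\lambda\le t$ to bound $s$, $t$ and hence $|T|$ into a finite list, each candidate $T$ being eliminated either because the resulting $(v,k,\lambda)$ fails $\lambda\mid k$ or $k\ge\lambda^2$, or by invoking Lemma~\ref{lem:out} to see that $|T|$ would have to be absurdly small compared with $|\Out(T)|$ to make $t\le m|T|$ compatible with $v<t^3$. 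Either way the proposition follows, and I would close by noting that the twisted wreath case has already been subsumed into the product case in Proposition~\ref{prop:tw}, so together with Propositions~\ref{prop:prod} and~\ref{prop:diag} the only surviving primitive types are affine and almost simple, which is exactly Theorem~\ref{thm:main}(a).
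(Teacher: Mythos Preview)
Your argument is correct and in fact cleaner than the paper's. Both proofs begin identically: $v=|T|^{m-1}$, $\lambda\le t$ gives $v<t^3$, and Lemma~\ref{lem:sub-sd} gives $t\mid m_1h$ with $m_1\le m$, $h\mid|T|$. The paper then uses $t\le m|T|$ to reduce to $m\le 5$, handles $m=5$ by a direct computation with $T\cong A_5$, and for $m\in\{2,3,4\}$ switches to a different divisibility: since $G_\alpha\le\Aut(T)\times S_m$ it has $k\mid (m!)\,|T|\,|\Out(T)|$, and coprimality of $t$ with $|T|$ then yields $t\mid (m!)\,|\Out(T)|$, whence $|T|^{m-1}<(m!)^3|\Out(T)|^3$; this forces $|T|<8\,|\Out(T)|^3$, and Lemma~\ref{lem:out} is invoked to pin $T$ down to $A_5$ or $A_6$, which are then eliminated by hand.

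Your route is shorter because you feed the coprimality $\gcd(t,|T|)=1$ back into Lemma~\ref{lem:sub-sd} itself: from $t\mid m_1h$ and $\gcd(t,h)=1$ you get $t\mid m_1\le m$, so $|T|^{m-1}<m^3$, an immediate contradiction for every $m\ge 2$. This bypasses Lemma~\ref{lem:out} and the separate treatment of $m=5$ entirely (indeed your preliminary reduction to $m\le 5$ via $t\le m|T|$ is not even needed). The only cosmetic point is that your ``closer look rules out $m=5$'' is unnecessary once you observe that the coprimality step applies uniformly in $m$; you may as well run $t\mid m_1\le m$ for all $m\ge 2$ from the outset.
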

\begin{proof}
Suppose by contradiction that $G$ is a primitive group of simple diagonal type. Then $v=|T|^{m-1}$, and so by Lemma \ref{lem:six}(b), $\lambda v < k^2$. This implies that $\lambda |T|^{m-1} < k^2=\lambda^2t^2 $. Since $\lambda\leq k^2$, we must have $\lambda\leq t$, and hence
\begin{align}\label{eq:diag}
|T|^{m-1}< t^3.
\end{align}
Note by Lemma~\ref{lem:sub-sd} that $k$ divides $\lambda m_1h$ and $m_1h\leq m|T|$. Then $t$ divides $m_1h$, and so $t\leq m|T|$. We now apply  \eqref{eq:diag} and conclude that
$|T|^{m-1} < m^3|T|^3$. Therefore, $|T|^{m-4}<m^3$. Since $|T|\geq 60$, we must have $m< 6$. If $m=5$, then $|T|<5^{3}$, and this follows that $T\cong A_{5}$. Note that $k$ divides $\lambda(v-1)=\lambda(|T|^{m-1}-1)$. Then $t$ divides $|T|^{m-1}-1=60^{4}-1=13\cdot 59\cdot 61\cdot 277$. Since $t\leq m|T|=300$ and $t\geq 2$, it follows that $t\in\{13, 59, 61, 277\}$. For each such $t$, we have that $\lambda\leq t$ and $k=t\lambda$, and so we easily observe that these parameters does not satisfy Lemma~\ref{lem:six}(a). Therefore $m\in \{2,3,4\}$. Note that $G_{\alpha}$ is isomorphic to a subgroup of $\Aut(T ) \times S_m$. Then by Lemma~\ref{lem:six}(b), the parameter $k$ divides  $|G_{\alpha}|$, and so $k$ divides $(m!)\cdot |T |\cdot |\Out(T )|$. On the other hand, Lemma~\ref{lem:six}(a) implies that $k$ divides $\lambda (|T|^{m-1}-1)$, and so $t$ divides $|T|^{m-1}-1$ implying that $\gcd(t, |T|)=1$. Since $k$ divides $(m!)\cdot |T |\cdot |\Out(T )|$ and $t$ is a divisor of $k$, we conclude that $t$ divides $(m!)\cdot |\Out(T)| $. Recall by \eqref{eq:diag} that $|T |^{m-1}< t^3$. Therefore,
\begin{equation}\label{eq:diag-2}
|T |^{m-1}<(m!)^{3}\cdot |\Out(T)|^3,
\end{equation}
where $m\in \{2,3,4\}$.

If $m = 2$, then $|T |< 8\cdot |\Out(T )|^3$. If $m = 3$, then $|T |^2< 6^3|\Out(T )|^3$, and so
$|T |< 6^{\frac{3}{2}}|\Out(T )|$. If $m = 4$, then $|T |^3 < 24^3|\Out(T )|^3$, and $|T | < 24|\Out(T )|$. Thus
for $m\leq 4$, we always have
\begin{equation*}
|T | < 8\cdot |\Out(T )|^3,
\end{equation*}
where $T$ is a non-abelian finite simple group. We now apply  Lemma~\ref{lem:out} and conclude that $T$ is isomorphic to $A_5$ or $A_6$. If $m=2$, then since $t$ divides $|T|^{m-1}-1=|T|-1$, we have that $t$ divides $59$ or $359$ when $T$ is isomorphic to $A_5$ or $A_6$, respectively. Thus $(v, k, \lambda)= (60, 59\lambda, \lambda)$ or $(v, k, \lambda)=(360, 359\lambda, \lambda)$. Since $\lambda>1$, in each case , we conclude that $k>v$, which is a contradiction. For $m=3,4$,  since  $|\Out(A_5)|=2$ and $|\Out(A_6)|=4$, it follows from \eqref{eq:diag-2} that $|T|< 48$ or $|T|<96 $ when $T$ is isomorphic to $A_5$ or $A_6$, respectively, which is a contradiction.
\end{proof}

\section{Proof of the main result}

In this section, we prove Theorem~\ref{thm:main}. Suppose that $\Dmc=(\Pmc,\Bmc)$ is a non-trivial symmetric $(v, k, \lambda)$ design with $\lambda$ divides $k$ and $k\geq \lambda^2$. Suppose also that $G$ is a flag-transitive automorphism group of $\Dmc$.

\begin{proof}[Proof of Theorem \ref{thm:main}]
If $G$ is point-primitive, then by O'Nan-Scott Theorem \cite{a:LPS-Onan-Scott} and Propositions~\ref{prop:prod}, \ref{prop:tw} and \ref{prop:diag}, we conclude that $G$ is of  affine or almost simple type. Suppose now that $G$ is point-imprimitive. Then $G$ leaves invariant a non-trivial partition $\mathcal{C}$ of $\Pmc$ with $d$ classes of size $c$. By \cite[Theorem~1.1]{a:Praeger-imprimitive}, there is a constant $l$ such that, for each $B\in \Bmc$ and $\Delta \in \mathcal{C}$,  $|B \cap\Delta|\in \{0,l\}$ and one of the following holds:
\begin{enumerate}[{\rm (a)}]
    \item $k \leq \lambda(\lambda -3)/2$;
    \item $(v, k, \lambda) = (\lambda^2(\lambda +2), \lambda(\lambda + 1), \lambda)$ with $(c, d, l) = (\lambda^2,\lambda +2,\lambda)$ or $(\lambda +2,\lambda^2, 2)$;
    \item $(v, k,\lambda, c, d, l) = ( \dfrac{(\lambda+2)(\lambda^2-2\lambda+2)}{4} , \dfrac{\lambda^2}{2} , \lambda, \dfrac{\lambda+2}{2} , \dfrac{\lambda^2-2\lambda+2}{2} , 2)$, and either $\lambda \equiv 0 \mod 4$, or $\lambda = 2u^2$, where $u$ is odd, $u\geq 3$, and $2(u^2-1)$ is a square;
    \item $(v, k, \lambda, c, d, l) = (\dfrac{(\lambda + 6)(\lambda^2+4\lambda-1)}{4} , \dfrac{\lambda (\lambda+5)}{2} , \lambda, \lambda+6, \dfrac{\lambda^2+4\lambda-1}{4},3)$, where $\lambda\equiv 1$ or $3$ $\mod 6$.
\end{enumerate}
We easily observe that the cases (a) and (c) can be ruled out as $k\geq \lambda^{2}$. If case (d) occurs, then $\lambda(\lambda+5)/2=k\geq \lambda^{2}$ implying that $\lambda\leq 5$. Since $\lambda \equiv 1 \text{ or } 3 \mod{6}$, it follows that $\lambda=3$ for which $(v,k,\lambda,c,d,l)=(45,12,3,9,5,3)$ which satisfies the condition in Theorem~\ref{thm:main}(b). Therefore, the case (b) can occur as claimed.
\end{proof}

%% Bibliography

\bibliographystyle{amcjoucc}
%\bibliography{references}

\end{document}